\pgfplotsset{compat=newest}
\theoremstyle{plain}
 \newtheorem{thm}{Theorem}[section]
\newtheorem{thm*}{Theorem}
 \newtheorem{lem}[thm]{Lemma}
 \newtheorem{prop}[thm]{Proposition}
 \newtheorem{cor}[thm]{Corollary}
 \numberwithin{equation}{section} 
\numberwithin{figure}{section} 
 \theoremstyle{plain}
 \theoremstyle{definition}
 \newtheorem{defn}[thm]{Definition}
\newcommand{\cC}{{{\mathcal C}}}
\newcommand{\fH}{{{\mathfrak H}}}
\newcommand{\cP}{{{\mathcal P}}}
\newcommand{\C}{{{\mathbb C}}}
\newcommand{\R}{{{\mathbb R}}}
\date{\today\\
2020 \emph{Mathematics Subject Classifications. 22F30, 54E35, 57M50.} \\
\emph{Key words.} Heisenberg group, Kor\'anyi metric, equilateral sets, equilateral dimension.}
\begin{document}

\title[Equilateral dimension of $\fH$]{Equilateral dimension of the Heisenberg group}

\author{J.~Kim \& I.D. Platis}

\begin{abstract}
Let $\fH$ be the first Heisenberg group equipped with the Kor\'anyi metric $d$. We prove that the equilateral dimension of $\fH$ is 4. 
\end{abstract}

\address{Department of Mathematics Education, Chungnam National University, Daejeon, 34134, South Korea}
\email{calvary@cnu.ac.kr}
\address{Department of Mathematics and Applied Mathematics, University of Crete, Heraklion, GR-70013, Greece}
\email{jplatis@math.uoc.gr}

\maketitle

\tableofcontents

\section{Introduction}
Let $(X,d)$ be a metric space. 
 The {\it equilateral dimension} $\dim_{\rm{E}}(X)$ of a $X$ is the maximum number of points that are all at equal distances from each other.
The equilateral dimension of the $n$-dimensional Euclidean space $(\R^n,e)$, where $e$ is the standard Euclidean metric,  is known to be $n + 1$; it is achieved by a regular simplex.  In the case of the $n$-dimensional vector space equipped with the $L^{\infty}$ norm the equilateral dimension  is $ 2^{n}$ and it is achieved by a hypercube, see \cite{DG} as well as \cite{P} and \cite{So}. 
However, the equilateral dimension of an $n$-dimensional vector space equipped with the $ L^1$ norm is not known; Kusner's conjecture states (see \cite{S2}) that it is exactly $2^n$, achieved by a cross-polytope.

There has been an extensive study on the  equilateral dimension of $L^p$-spaces, $1<p<\infty$; in particular, for $p=2$, $\dim_{\rm{E}}(X)=n+1$ and for  $2< p<\infty$, $\dim_{\rm{E}}(X)\ge n+1$ (see for instance  \cite{SV}). Various results also exist  for arbitrary normed spaces, illustrative references are \cite{B}, \cite{S}. Also,
for $n$-dimensional Riemannian manifolds $M$, we have $\dim_{\rm{E}}(M)=n+1$ (for manifolds with bounded curvature see for instance \cite{M}), and for Minkowski spaces we refer to \cite{P}. 

Generally speaking, to track down the equilateral dimension of an arbitrary metric space is not an obvious task; to have a clear understanding of the properties of its similarity group is certainly helpful. As an elementary example we will present a short proof of the fact that $\dim_{\rm{E}}(\R^3,e)=4$. Since dilations are within the similarity group of $(\R^3,e)$, we may normalise any equilateral set $S=\{p_0,p_1,\dots\}$ so that $e(p_i,p_j)=1$. We may further normalise so that $p_0={\bf 0}=(0,0,0)$ and $p_1={\bf 1}=(1,0,0)$ so that all points of $S$ lie on the Euclidean unit sphere $S^2$. If now $p\in S\setminus\{{\bf 0},{\bf 1}\}$, $p=(x,y,z)$, then the conditions $e(p,{\bf 0})=e(p,{\bf 1})=1$ lead us to
$$
x=1/2,\quad y^2+z^2=3/4,
$$  
that is, all points of $S$ lie on the above circle which is the intersection of the unit sphere and the sphere centred at ${\bf 1}$ and of radius 1. This circle is centred in $(1/2,0,0)$ and its radius is $\sqrt{3}/2$. From an elementary argument we then find that there can be three points on a circle which are at distance 1 from one another only if and only if its radius is $r=\sqrt{3}/3$. 
Therefore there can be at most two points on this circle having distance equal to 1, hence $\dim_{\rm{E}}(\R^3)=4$.

We will follow the rationale of this elementary approach in the case of the Heisenberg group $\fH$ which is the object of our study in this article (for the definition, see Section \ref{sec-heis}).
Contrary to the Euclidean case and to all cases mentioned before, $\fH$ is equipped with a metric which is not induced by a norm. Thus we are not able to use general arguments from the study of equilateral dimension of finite normed spaces; rather, we concentrate in the use of the properties of the similarity group of $\fH$. In this article we prove: 
\begin{thm}\label{thm-main}
The equilateral dimension $\dim_{\rm{E}}(\fH)$ of $\fH$ is 4.
\end{thm}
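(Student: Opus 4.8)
The plan is to prove the two inequalities $\dim_{\rm E}(\fH)\ge 4$ and $\dim_{\rm E}(\fH)\le 4$ separately, following the Euclidean rationale sketched above but adapting it to the fact that the similarity group of $\fH$ (left translations, Heisenberg dilations, rotations $(z,t)\mapsto(e^{i\theta}z,t)$, and the reflection $(z,t)\mapsto(\bar z,-t)$, all isometries except the dilations) does \emph{not} act transitively on the Kor\'anyi unit sphere. The lower bound is obtained by an explicit construction, and the upper bound is reduced, exactly as in $\R^3$, to counting equidistant points on an intersection of two spheres.

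For the lower bound I would exhibit an equilateral ``tetrahedron''. Fixing $r>0$, take three points on a horizontal circle together with one point on the vertical axis,
$$p_k=\bigl(r\,e^{2\pi i k/3},\,0\bigr)\ \ (k=0,1,2),\qquad q=(0,h).$$
The rotation $(z,t)\mapsto(e^{2\pi i/3}z,t)$ is an isometry that cyclically permutes $p_0,p_1,p_2$ and fixes $q$, so the three sides of the triangle are automatically equal and the three distances $d(q,p_k)$ are equal; only two gauge computations are then needed. One finds the common side length $d(p_0,p_1)=2^{1/2}3^{1/4}r$ and $d(q,p_k)=(r^4+h^2)^{1/4}$, and choosing $h=\sqrt{11}\,r^2$ equalises all six distances. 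Hence $\{p_0,p_1,p_2,q\}$ is equilateral and $\dim_{\rm E}(\fH)\ge 4$; this step is routine.

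For the upper bound, suppose $S$ were equilateral with $|S|\ge 5$. A dilation normalises the common distance to $1$; a left translation places $p_0=(0,0)$; and a rotation followed, if necessary, by the reflection $(z,t)\mapsto(\bar z,-t)$ lets me assume $p_1=(a,b)$ with $a,b\ge 0$ and $a^4+b^2=1$. Every remaining point of $S$ then lies on the intersection $\Sigma\cap\Sigma_1$ of the unit spheres $\Sigma=\{d(\cdot,p_0)=1\}$ and $\Sigma_1=\{d(\cdot,p_1)=1\}$, and these points are pairwise at distance $1$. The theorem therefore reduces to the key lemma: \emph{for every admissible $(a,b)$, the curve $\Sigma\cap\Sigma_1$ carries at most two points at mutual Kor\'anyi distance $1$.} Granting the lemma, the at least three points of $S\setminus\{p_0,p_1\}$ cannot coexist, so $|S|\le 4$.

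The hard part is precisely this lemma. In the Euclidean case the analogous locus is a round circle, on which an elementary angle count settles matters; here $\Sigma\cap\Sigma_1$ is a genuinely non-round real algebraic curve, and, because the stabiliser of the origin fails to act transitively on $\Sigma$, the normalisation leaves a free parameter $b\in[0,1]$, so the argument must be carried out uniformly over a whole one-parameter family of curves. I would attack it by writing out the two quartic sphere equations, subtracting them to obtain a surface containing the curve, and eliminating one variable to get an explicit parametrisation of $\Sigma\cap\Sigma_1$; I would then impose the pairwise distance-$1$ condition along the curve and show the resulting system has at most two mutually compatible solutions. The main obstacle is controlling this last step across the entire family in $b$, which I expect to require monotonicity or convexity estimates for the Kor\'anyi distance restricted to $\Sigma\cap\Sigma_1$ rather than a single closed-form computation.
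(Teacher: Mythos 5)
Your lower bound is complete and is exactly the paper's canonical configuration $S_4^{can}$: three points on the horizontal circle of radius $r=12^{-1/4}$ at arguments $0,\pm 2\pi/3$ together with $(0,\sqrt{11}\,r^2)=(0,\sqrt{11/12})$, and your two gauge computations $d(p_0,p_1)=12^{1/4}r$ and $d(q,p_k)=(r^4+h^2)^{1/4}$ are correct. Your reduction of the upper bound is also sound as far as it goes: after normalising $p_0=(0,0)$ and $p_1=(a,b)$, all further points of an equilateral set lie on $\Sigma\cap\Sigma_1$ and are pairwise at distance $1$, so everything hinges on the claim that this curve never carries three points at mutual Kor\'anyi distance $1$.

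The genuine gap is that this claim --- which you yourself label ``the hard part'' --- is nowhere proved; you only describe how you \emph{would} attack it and what estimates you \emph{expect} to need. That claim is the entire mathematical content of the theorem and occupies Sections \ref{sec:main} and \ref{sec-appendix} of the paper. The paper does not establish your uniform lemma over the whole family $(a,b)$; instead it splits according to the $\C$-circle spanned by points of $S$: if three points lie on a common finite $\C$-circle, Lemmas \ref{lem-ayxcircle} and \ref{lem-4can} pin the configuration down to $S_4^{can}$ (Theorem \ref{thm-4max3}); if two points lie on a vertical line, the equidistant locus is a planar circle of radius $(3/4)^{1/4}>12^{-1/4}$ and Proposition \ref{prop-c-equid} applies (Theorem \ref{prop:fin}); and in the remaining case (Theorem \ref{prop:fin2}) the curve $C_3^{fin}=\Sigma\cap\Sigma_1$ is parametrised in Kor\'anyi--Reimann coordinates, the distance-$1$ condition along it is reduced to a degree-six polynomial equation $P_{t_0}(t)=0$ in $t=\tan(\theta/2)$, and the conclusion comes from the symmetry $P_{t_0}(t)=P_t(t_0)$ plus a count of real roots --- not from monotonicity or convexity of the restricted distance, for which no argument is offered in your proposal. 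Your observation that the normalisation leaves a free parameter $b$, so that the lemma must be controlled over a one-parameter family of curves, is a legitimate and nontrivial concern (the paper's final case only ever places the second point at $(1,0)$, i.e.\ $b=0$), but identifying the difficulty is not the same as resolving it: as written, your argument for $\dim_{\rm E}(\fH)\le 4$ is a programme, not a proof.
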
 
Therefore   $\dim_{\rm{E}}(\fH)$ is equal to its Hausdorff dimension. This theorem answers to the negative a question posed in \cite{CT} on whether there exist five equilateral points in $\fH$. We also mention that in that article there exist examples of equilateral subsets comprising of three points. 

This paper is organised as follows. In Section \ref{sec:setup} we state well known facts about the Heisenberg group and the Kor\'anyi metric. The equilateral dimension of $\C$-circles is also calculated explicitly in \ref{prop-c-equid}. Section \ref{sec:main} is the section where Theorem \ref{thm-main} is proved. For the proof, several steps are taken; first, we study the case of equilateral sets where three of their points lie in the same $\C$-circle (Theorem \ref{thm-4max3}). Then, we proceed to the case of equilateral sets such that only two of their points lie in the same $\C$-circle. Since a $\C$-circle can be either infinite or finite, we distinguish cases again: We study the infinite $\C$-circle case in Section \ref{sec-2p-inf} (Theorem \ref{prop:fin}) and the finite $\C$-circle case in Section \ref{sec-p-fin} (Theorem \ref{prop:fin2}). The proof of Theorem \ref{thm-main} follows from combining all the aforementioned theorems. Although elementary, the proof of Theorem \ref{prop:fin2} in particular, involves a series of long but mostly straightforward calculations. Some of them are contained into the Appendix.

{\it Acknowledgements.} We wish to thank Vassilis Chousionis for proposing the problem to the second author and J.R. Parker for useful observations. Part of this work was carried out while the first author was visiting University of Crete and the second author was visiting University of Ioannina. Hospitality in both cases is gratefully appreciated. The first author was supported by 2021 Chungnam National University fund. 

\section{Preliminaries}\label{sec:setup}
In Section \ref{sec-heis} we define the metric space $(\fH,d)$, where $\fH$ is the Heisenberg group and $d$ is the  Kor\'anyi metric, and we describe the similarity group $G$ of  $(\fH,d)$. We refer the reader to the standard book \cite{Gol} for an extensive study of $\fH$. In Section \ref{sec-korsphere} we give a distance formula for points on the unit Kor\'anyi sphere. Finally, in Section \ref{sec-c-circles} we calculate the equilateral dimension of $\C$-circles (Proposition \ref{prop-c-equid}). 
\subsection{Heisenberg group}\label{sec-heis}
By $\fH$ we shall denote the Heisenberg group; recall that $\fH$ is $\C\times\R$ with group law:
$$
(z,t)\star (w,s)=(z+w,t+s+2\Im(z\overline{w})).
$$
The Heisenberg norm (Kor\'anyi gauge) is given by
$$
\left|(z,t)\right|_\fH=| |z|^2-it|^{1/2},
$$
Despite its name, $|\cdot|_\fH$ is not a norm in the usual sense. But from this we obtain a distance $d$ is given by
\begin{equation}\label{eq-d}
d\left((z,t),\,(w,s)\right)
=\left|(w,s)^{-1}\star (z,t)\right|_\fH=\left(|z-w|^4+(t-s+2\Im(z\overline{w}))^2\right)^{1/4}.
\end{equation}
This is the {\it Kor\'anyi distance} and it is invariant under: 
\begin{enumerate}
\item [{a)}] left-translations $L_{w,s)}$ of $\fH$, $$(z,t)\to(w,s)\star (z,t);$$
\item [{b)}] rotations $R_\phi$ around the $t$-axis  $$(z,t)\mapsto(ze^{i\phi},t)\quad\phi\in\R;$$
\item [{c)}] conjugation $j$, $$j(z,t)=(\overline{z},-t).$$.
\end{enumerate}
The distance $d$ is also scaled up to multiplicative constants by the action of Heisenberg dilations $(z,t)\mapsto$ $(rz,r^2t)$, $r\in\R_*$. The {\it similarity group}
${G=\rm Sim}(\fH, d)\simeq\fH\times\R\times \R_{>0},$ comprises all the above transformations. We note that $d$ is not a path metric. Moreover, there is no Riemannian structure in $\fH$ with underlying metric $d$.  The {\it Carnot-Carath\'eodory metric} in $\fH$ which corresponds to its sub-Riemannian structure will not be of our concern here.
\subsubsection{The unit Kor\'anyi sphere and a distance formula}\label{sec-korsphere}
A general Kor\'anyi sphere $S_\fH^r(p_0)$ centred at $p_0$ with radius $r$ is the locus comprising of points $p\in\fH$ such that $d(p,p_0)=r$. The {\it Kor\'anyi unit sphere} $$
S^1_\fH=\{p\in\fH\;|\;d(p,o)=1\},$$ where $o=(0,0)$, is the surface described by the equation
$$
|z|^4+t^2=1.
$$
When $p_0=(z_0,t_0)$ and $p=(z,t)$ are points in $S^1_\fH$, then (\ref{eq-d}) takes the form
\begin{eqnarray}\label{eq-d-s}
d^4(p,p_0)&=&2+6|z|^2|z_0|^2-2tt_0\\
\notag &&-4(|z|^2+|z_0|^2)\Re(z\overline{z_0})+4(t-t_0)\Im(z\overline{z_0}).
\end{eqnarray}
The above equation may be simplified even further by using {\it Kor\'anyi-Reimann coordinates} (see \cite{KR}):
$$
z=\sqrt{\cos\theta}e^{i\phi},\quad t=\sin\theta,
$$
where $(\theta,\phi)\in[-\pi/2,\pi/2]\times[-\pi,\pi)$. In fact, Equation (\ref{eq-d-s}) then reads as
\begin{eqnarray}\label{eq-dist-sph}
\notag d^4(p,p_0)&=&2+6\cos\theta\cos\theta_0-2\sin\theta\sin\theta_0\\
\label{eq-d-s-K} &&-8\sqrt{\cos\theta\cos\theta_0}\cos((\theta+\theta_0)/2)\cos\left((\phi+\theta/2)-(\phi_0+\theta_0/2)\right).
\end{eqnarray}
For fixed $p_0\in S^1_\fH$ and $p\in S^1_\fH$, the locus $d(p,p_0)=1$ is the intersection of $S^1_\fH$ with the Kor\'anyi sphere centred at $p_0$ with radius 1. Using (\ref{eq-dist-sph}) we find that this locus is a curve on $S^1_\fH$ given parametrically by
\begin{equation}\label{eq-sph-curve}
\cos\left((\phi+\theta/2)-(\phi_0+\theta_0/2)\right)=\frac{1+6\cos\theta\cos\theta_0-2\sin\theta\sin\theta_0}{8\sqrt{\cos\theta\cos\theta_0}\cos((\theta+\theta_0)/2)},
\end{equation}
if $p_0\neq(0,\pm 1)$. If $p_0=(0,\pm 1)$, this curve is the Euclidean circle $\theta=\pm\pi/6$, respectively.
\subsubsection{Convention}  Let $S\subset\fH$ be a $d_0$ equilateral set of points: $d(p,q)=d_0$ for each $p,q\in S$. By applying the dilation $D_{1/d_0}$ we may always suppose that $S$ is a 1-equilateral set, or simply {\it equilateral set} from now on. Throughout this paper we will further suppose that we deal with such sets.

\subsection{$\C$-circles and their equilateral dimension}\label{sec-c-circles}
Recall that a $\C$-circle in $\fH$ is a topological circle which is the boundary of a complex geodesic of the underlying 2-dimensional complex hyperbolic Siegel domain. Such a circle comes in two flavours: a) {\it infinite $\C$-circles} when one of the endpoints of the complex geodesic is $\infty$ and b) {\it finite $\C$-circles} when none of the endpoints of the complex geodesic is $\infty$. Infinite $\C$-circles are lines vertical to the plane $t=0$; they are all $G$-images of the $\C$-circle 
\begin{equation}\label{eq-c-inf}
\cC_\infty=\{p=(z,t)\in\fH\;|\;z=0\}.
\end{equation}
 On the other hand, finite $\C$-circles are ellipses which are all $G$ images of the Euclidean circle 
\begin{equation}\label{eq-c-fin}
 \cC_1=\{p=(z,t)\in\fH\;|\;|z|=1,\;t=0\}.
 \end{equation}
We note here that a) there is no element of $G$ that can map an infinite $\C$-circle to a finite $\C$-circle or vice versa and b) any two points of $\fH$ lie in a $\C$-circle. For details on $C$-circles we refer to \cite{Gol}.
\begin{lem}\label{lem-ayxcircle}
There can be at most two equilateral points lying in the same infinite $\C$-circle and at most three equilateral points lying in the same finite $\C$-circle.
\end{lem}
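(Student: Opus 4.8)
The plan is to reduce both assertions, via the similarity group $G$, to elementary facts about equilateral sets in Euclidean space. Since every element of $G$ scales $d$ by a positive constant, it carries equilateral sets to equilateral sets bijectively; hence the maximal number of equilateral points lying in a given $\C$-circle depends only on its $G$-orbit. As recalled above, every infinite $\C$-circle is a $G$-image of $\cC_\infty$ and every finite $\C$-circle is a $G$-image of $\cC_1$, so it suffices to treat these two model circles.

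The key observation is that, on each model circle, the Kor\'anyi distance is a strictly increasing function of the ambient Euclidean distance, so that a subset is $d$-equilateral if and only if it is Euclidean-equilateral. On $\cC_\infty$ the points are $(0,t)$, and formula (\ref{eq-d}) gives $d\big((0,t_1),(0,t_2)\big)=|t_1-t_2|^{1/2}$, i.e.\ $d=\ell^{1/2}$ where $\ell$ denotes the Euclidean distance. On $\cC_1$ I would parametrise $z=e^{i\alpha}$, $w=e^{i\beta}$ with $t=s=0$ and compute from (\ref{eq-d}) that
\begin{equation*}
d^4=|z-w|^4+4\,\Im(z\overline{w})^2=16\sin^2\big((\alpha-\beta)/2\big)=4\,|z-w|^2,
\end{equation*}
using $|z-w|^2=4\sin^2((\alpha-\beta)/2)$ and $\sin^2+\cos^2=1$; thus $d=\sqrt{2\,\ell}$ with $\ell=|z-w|$ the Euclidean distance in the plane $t=0$. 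In both cases $d$ is a strictly increasing function of $\ell$, as claimed, so the $d$-equilateral subsets of each model circle are precisely its Euclidean-equilateral subsets.

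It then remains to invoke the classical Euclidean values. The circle $\cC_\infty$ is a straight line, and three distinct points of a line cannot be pairwise equidistant (that is, $\dim_{\rm{E}}(\R)=2$); hence at most two points of $\cC_\infty$ are equilateral. The circle $\cC_1$ lies in the plane $t=0$, and since $\dim_{\rm{E}}(\R^2)=3$ no four points of that plane can be pairwise Euclidean-equidistant; hence at most three points of $\cC_1$ are equilateral. Transporting these bounds back along $G$ yields the lemma.

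The only computation requiring care is the identity $d^4=4|z-w|^2$ on $\cC_1$; everything else is formal. The main conceptual point is simply recognising that $d$ restricted to either model $\C$-circle is a monotone reparametrisation of the Euclidean distance, which collapses the problem onto the values $\dim_{\rm{E}}(\R)=2$ and $\dim_{\rm{E}}(\R^2)=3$; I do not expect any genuine obstacle here, and this is precisely the feature that will fail once three equilateral points are allowed to leave a common $\C$-circle, which is where the real difficulty of Theorem \ref{thm-main} lies.
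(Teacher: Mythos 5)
Your proof is correct, but it takes a genuinely different route from the paper's. You reduce to the two model circles via the similarity group and then observe that on each model the Kor\'anyi distance is a strictly increasing function of the ambient Euclidean distance ($d=\ell^{1/2}$ on the vertical axis, and $d^4=|z-w|^4+4\Im(z\overline{w})^2=16\sin^2((\alpha-\beta)/2)=4\ell^2$ on $\cC_1$ --- this identity checks out, since $4\sin^2(\alpha-\beta)=16\sin^2((\alpha-\beta)/2)\cos^2((\alpha-\beta)/2)$), which collapses the lemma onto $\dim_{\rm{E}}(\R)=2$ and $\dim_{\rm{E}}(\R^2)=3$. The paper instead computes directly: on the vertical axis it shows the only point equidistant from $(0,\pm 1/2)$ is the origin, which sits at distance $(1/4)^{1/4}\neq 1$; on a planar circle of radius $r$ it finds the two points at distance $1$ from $(r,0)$ and checks that their mutual distance equals $1$ exactly when $r=12^{-1/4}$ (with $\theta_0=2\pi/3$). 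Your argument is shorter and more conceptual, but the paper's computation is not wasted effort: the critical radius $12^{-1/4}$ and the angle $2\pi/3$ it produces are exactly what feed into Proposition \ref{prop-c-equid}(2), the normalisation $r_0=12^{-1/4}$ of $S_3^{can}$ in Section \ref{sec-4max3}, and the radius comparison in Theorem \ref{prop:fin}. Your formula $d^4=16r^4\sin^2((\alpha-\beta)/2)$ on a circle of radius $r$ would recover that constant with one extra line, so nothing is lost, but as written your proof establishes only the cardinality bound and not the quantitative refinement the later sections rely on.
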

\begin{proof}
For the first statement, we may normalize so that the infinite $\C$-circle is the vertical axis which comprises points $(0,t)$. If $S=\{p_1,p_2,\dots\}$ is an equilateral set of points on that $\C$-circle, by applying a suitable dilation and a vertical translation if necessary we may assume that $p_1=(0,-1/2)$ and $p_2=(0,1/2)$ so that $d(p_1,p_2)=1$. Now, if $(0,t)$ is equidistant from $p_1$ and $p_2$ , short calculations lead $t=0$. But this cannot be the case since $d((0,0),\;p_1)=d((0,0),\;p_2)=(1/4)^{1/4}\neq 1$.

For the second statement, we may normalize so that the finite $\C$-circle is the planar circle $|z|=r$. If $S=\{p_1,p_2,\dots\}$ is an equilateral set of points on that $\C$-circle, by performing a rotation we may suppose that $p_1=(r,0)$. Points $p=(re^{i\theta},0)$ in the circle such that $d(p,p_1)=1$ must then satisfy the relation
$$
8r^4(1-\cos\theta)=16r^4\sin^2(\theta/2)=1.
$$
This is meaningful only when $r\ge 1/2$. In that case we obtain points $p=(re^{i\theta},0)$, with
$$
\theta=\pm\arccos\left(1-\frac{1}{8r^4}\right)=\pm\theta_0.
$$
Now it is straightforward to show that $d\left((re^{i\theta_0},0),\;(re^{-i\theta_0},0)\right)=1$ if and only if $r=12^{-1/4}$ (and $\theta_0=2\pi/3$). In all other cases $d\left((re^{i\theta_0},0),\;(re^{-i\theta_0},0)\right)>1$ which proves that there can be no equilateral triple of points in this case.  
\end{proof}
From Lemma \ref{lem-ayxcircle} we immediately have
\begin{prop}\label{prop-c-equid}
Let $C$ be a $\C$-circle in $\fH$. Then:
\begin{enumerate}
\item $dim_{\rm{E}}(C)=2$ if $C$ is infinite;
\item $dim_{\rm{E}}(C)=3$ if $C$ is the $G$-image of a Euclidean circle centred at the origin and with radius $r=12^{-1/4}$. If $C$ is any other finite $\C$-circle then $dim_{\rm{E}}(C)=2$.
\end{enumerate} 
\end{prop}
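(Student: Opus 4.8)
The plan is to deduce the proposition directly from Lemma~\ref{lem-ayxcircle}: that lemma already contains all the genuine content, and what is left is only to pair its upper bounds with the obvious lower bounds after passing to the normal forms of Section~\ref{sec-c-circles}. First I would note that $\dim_{\rm E}$ is an invariant of the similarity group $G$, so it suffices to compute it on a representative of each $G$-orbit. Every infinite $\C$-circle is a $G$-image of the vertical axis (\ref{eq-c-inf}), and every finite one a $G$-image of a Euclidean circle $|z|=r$ centred at the origin as in (\ref{eq-c-fin}); under our normalisation to equilateral distance $1$ the radius $r$ of this centred representative is the relevant parameter.

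For an infinite $\C$-circle the bound $\dim_{\rm E}\le 2$ is precisely the first assertion of Lemma~\ref{lem-ayxcircle}. For the matching lower bound I would exhibit the two points $(0,-1/2)$ and $(0,1/2)$ on the vertical axis; a one-line check from (\ref{eq-d}) gives $d\big((0,-1/2),(0,1/2)\big)=1$, so these form an equilateral pair and $\dim_{\rm E}\ge 2$. Hence $\dim_{\rm E}=2$, which is assertion (1).

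For a finite $\C$-circle Lemma~\ref{lem-ayxcircle} gives $\dim_{\rm E}\le 3$, with the value $3$ attainable only for the normalised radius $r=12^{-1/4}$. When $r=12^{-1/4}$ I would simply display the triple found inside the proof of the lemma, namely $(r,0)$ together with $(re^{\pm 2\pi i/3},0)$ (so that $\theta_0=2\pi/3$); a direct computation from (\ref{eq-d}) shows that all three pairwise distances equal $1$, so $\dim_{\rm E}\ge 3$ and therefore $\dim_{\rm E}=3$. For every other finite $\C$-circle the same computation in the lemma shows that no three of its points can simultaneously be at distance $1$, whence $\dim_{\rm E}\le 2$, while two points realising $d=1$ on the circle give $\dim_{\rm E}\ge 2$; thus $\dim_{\rm E}=2$. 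Together these give assertion (2).

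I do not expect a genuine obstacle at this stage: the delicate step---reducing an equilateral triple to the symmetric configuration $\pm\theta_0$ and solving the resulting distance equation, which forces $r=12^{-1/4}$ and $\theta_0=2\pi/3$---has already been dispatched in Lemma~\ref{lem-ayxcircle}. The only points demanding any care are the reduction to the centred normal form via $G$ and the verification that the stated lower-bound configurations actually realise the distance $1$, both of which are immediate from (\ref{eq-d}).
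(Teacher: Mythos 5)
Your proposal is correct and follows essentially the same route as the paper, whose entire proof is that the proposition is immediate from Lemma \ref{lem-ayxcircle}; you merely make explicit the lower-bound configurations that the paper leaves implicit in that lemma's proof. One cosmetic remark: the bound $\dim_{\rm E}\ge 2$ is automatic for any set containing two points (any pair is equilateral, and the paper's convention rescales by a dilation afterwards), so you need not insist that your two points on the circle realise distance exactly $1$.
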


\section{Equilateral dimension of $\fH$}\label{sec:main}
In Section \ref{sec-4max3} we study equilateral sets of points such that three of them lie in the same $\C$-circle. By Lemma \ref{lem-ayxcircle}, this $\C$-circle has to be finite. We prove in Theorem \ref{thm-4max3} that there can be at most four equidistant points such that three of them lie in the same $\C$-circle. In Section \ref{sec-2p-inf} we prove in Theorem \ref{prop:fin} that
the maximum number of equilateral points in $\fH$ such that two of them lie in an infinite $\C$-circle is 4.  Finally in Section \ref{sec-p-fin} we prove our result for the case where two points lie in a finite $\C$-circle in Theorem \ref{prop:fin2}.
\subsection{Three points in the same finite $\C$-circle}\label{sec-4max3}
Given an equilateral set $S=\{p_1,p_2,p_3\}$ of  points lying in the same finite $\C$-circle $\cC$, let $\cP$ denote the plane such that $S\subset \cP$. By applying a Heisenberg translation we may suppose that $\cP=\{(z,t)\in\fH\;|\;t=0\}$ and that $\cC$ is a Euclidean circle centred at the origin. By Proposition \ref{prop-c-equid}  the radius of this circle is necessarily $r_0=12^{-1/4}$ and by applying a suitable dilation and a rotation if necessary, we may also normalise so that
$$
p_1=(r_0,0),\quad p_2=r_0e^{i\theta_0},\quad p_3=r_0e^{-i\theta_0},
$$
where $\theta_0=2\pi/3$.
\begin{defn}
The canonical 3-equilateral set is the set $$S_3^{can}=\{(r_0,0),(r_0e^{i\theta_0},0),(r_0e^{-i\theta_0},0)\}.$$
\end{defn}
\begin{lem}\label{lem-4can}
The only points in $\fH$ equidistant from $S_3^{can}$ are the points $(0\pm t_0)$, where $t_0=\sqrt{11/12}$.
\end{lem}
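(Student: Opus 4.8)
The plan is to find all points $p=(z,t)\in\fH$ satisfying $d(p,p_1)=d(p,p_2)=d(p,p_3)=1$, where the $p_i$ are the three vertices of $S_3^{can}$, and to show that the only solutions are $(0,\pm t_0)$ with $t_0=\sqrt{11/12}$. First I would write down the three distance equations explicitly using the Kor\'anyi distance formula \eqref{eq-d}. Writing $z=x+iy$ and recalling that $r_0=12^{-1/4}$ so $r_0^4=1/12$, each equation $d(p,p_k)^4=1$ expands into a polynomial relation in $x,y,t$ of the form $|z-z_k|^4+\bigl(t+2\Im(z\overline{z_k})\bigr)^2=1$, since the $p_k$ all have zero height. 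The key structural point is that the configuration $S_3^{can}$ is invariant under the order-$3$ rotation $R_{2\pi/3}$ about the $t$-axis, which cyclically permutes $p_1,p_2,p_3$; this symmetry is what I would exploit to reduce the algebra.

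The central step is to eliminate the common quartic terms. Subtracting the equations pairwise, say $d(p,p_1)^4-d(p,p_2)^4=0$ and $d(p,p_1)^4-d(p,p_3)^4=0$, cancels the leading $|z|^4$ and $t^2$ contributions and leaves two equations that are \emph{linear in $t$} and at most quadratic in $x,y$. Concretely, each difference has the shape $\bigl(\text{quadratic in }z\bigr)+t\cdot\bigl(\text{linear in }z\bigr)=0$. Solving this pair of equations for $(x,y)$ — or equivalently using the threefold symmetry to argue that the only point fixed by the cyclic permutation of the constraints must lie on the rotation axis — should force $z=0$, i.e. $x=y=0$. The cleanest route is to observe that the three conditions are permuted by $R_{2\pi/3}$, so their common solution set is $R_{2\pi/3}$-invariant; combined with the fact that the two linear-in-$t$ difference equations are independent, this pins the horizontal coordinate to the center $z=0$.

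Once $z=0$ is established, I would substitute back into any single equation $d(p,p_1)^4=1$. With $z=0$ the cross terms $\Im(z\overline{z_k})$ and $|z-z_k|$ simplify to constants: $|{-z_k}|^4=r_0^4=1/12$ and $\Im(0)=0$, so the equation collapses to $r_0^4+t^2=1$, giving $t^2=1-1/12=11/12$ and hence $t=\pm\sqrt{11/12}=\pm t_0$. This recovers exactly the two claimed points $(0,\pm t_0)$ and confirms there are no others.

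I expect the main obstacle to be the elimination step that forces $z=0$: the pairwise differences are genuine polynomial equations in $x,y,t$, and one must verify that the only simultaneous solution has $z=0$ rather than some stray off-axis point. The symmetry argument makes this plausible and likely short, but making it rigorous requires checking that the two linear-combination constraints are not degenerate (i.e. that they genuinely cut down the horizontal plane to the single point $z=0$) rather than, say, sharing a common line of solutions. I would guard against this by computing the two difference equations explicitly at the specific values $z_1=r_0$, $z_2=r_0e^{2\pi i/3}$, $z_3=r_0e^{-2\pi i/3}$ and confirming their coefficient matrix (in $x,y$) has full rank, after which the substitution yielding $t=\pm t_0$ is routine.
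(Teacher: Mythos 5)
Your overall strategy is the same as the paper's: subtract the quartic distance equations pairwise to kill the leading $|z|^4$ and $t^2$ terms, show the resulting pair of equations forces $z=0$, and then substitute back into one equation to get $r_0^4+t^2=1$, i.e.\ $t=\pm\sqrt{11/12}$. The final substitution step is exactly right. However, the mechanism you propose for the crucial elimination step would not work as stated. The pairwise differences are not ``at most quadratic in $x,y$'': they contain cubic terms of the form $x|z|^2$ and $y|z|^2$ (coming from $|z-z_1|^4-|z-z_k|^4$), so there is no ``coefficient matrix in $x,y$'' whose full rank would pin $z$ to the origin --- the system is not linear in $x,y$. The paper's resolution is to regroup each difference as
$$\Re\bigl((|z|^2+12^{-1/2}+it)\cdot z(3\pm i\sqrt{3})\bigr)=0,$$
and to read the two resulting equations as a homogeneous \emph{linear} system in the two unknowns $u=|z|^2+12^{-1/2}$ and $t$, with coefficients linear in $x,y$. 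The determinant of that $2\times2$ system is a nonzero multiple of $|z|^2$, so for $z\neq0$ one gets $u=t=0$, contradicting $u>0$. The pivot, in other words, is to treat $|z|^2+12^{-1/2}$ and $t$ as the linear variables, not $x$ and $y$; that is the idea missing from your outline.

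Your proposed symmetry shortcut also does not close the gap on its own: the common solution set being $R_{2\pi/3}$-invariant only says it is a union of orbits, and an orbit of three off-axis points would be perfectly consistent with that invariance. (Indeed, you cannot rule such configurations out by symmetry alone --- that is essentially what one must compute.) So the symmetry observation is a useful sanity check but cannot replace the determinant computation. With the regrouping above the argument goes through in a few lines, and the rest of your plan (substituting $z=0$ to obtain $t^2=1-1/12=11/12$) is correct and matches the paper.
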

\begin{proof}
The proof is by elementary calculations. If $(z,t)\in\fH$ such that $d((z,t),(r_0,0))=1$, then
\begin{equation}\label{eq-4can1}
|z-r_0|^4+(t+2r_0 \Im(z))^2=1.
\end{equation}
Also, $d((z,t),(r_0e^{\pm i\theta_0},0))=1$ gives
\begin{equation}\label{eq-4can2}
|z-r_0e^{\pm i\theta_0}|^4+(t+2r_0 \Im(ze^{\mp i\theta_0}))^2=1.
\end{equation}
After expanding, we subtract (\ref{eq-4can2}) from (\ref{eq-4can1}) to obtain
$$
\Re\left((|z|^2+12^{-1/2}+it)]\cdot z(3\pm i\sqrt{3})\right)=0.
$$
In other words, we have the system of equations
\begin{eqnarray*}
&&
(3x-\sqrt{3}y)(|z|^2+12^{-1/2})-(3y+\sqrt{3}x)t=0,\\
&&
(3x+\sqrt{3}y)(|z|^2+12^{-1/2})-(3y-\sqrt{3}x)t=0.
\end{eqnarray*}
Viewing this system as a homogeneous linear system in variables $|z|^2+12^{-1/2}$ and $t$, we find that if $z\neq 0$ then we must have $t=|z|^2+12^{-1/2}=0$, a contradiction.
Thus $z=0$ and we obtain from both Eqs. (\ref{eq-4can1}) and (\ref{eq-4can2}) that $t=\pm t_0=\sqrt{11/12}$.
\end{proof}
\begin{defn}
The canonical 4-equilateral set $S_4^{can}$ is the set $S_3^{can}\cup\{(0,t_0)\}$.
\end{defn}
Any set of four equidistant points in $\fH$ such that three of them lie in the same finite $\C$-circle is $G$-equivalent to the canonical set $S_4^{can}$.
If $S=\{p_1,p_2,p_3,p_4\}$ and $p_i$, $i=1,2,3$ lie in the same finite $\C$-circle, then  the subset $S_0=\{p_1,p_2,p_3\}$  is $G$-equivalent to the set $S_3^{can}$. From Lemma \ref{lem-4can} we then have that the point $p_4$ is mapped either to $(0,t_0)$ or $(0,-t_0)$. By applying a conjugation if necessary, we may normalise so that the wished point is $(0,t_0)$.

\begin{thm}\label{thm-4max3}
The maximal number of equidistant points in $\fH$ such that three of them lie in the same finite $\C$-circle is 4. 
\end{thm}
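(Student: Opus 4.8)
The plan is to show that the value $4$ is simultaneously achieved and cannot be exceeded in this configuration, so that the maximum is exactly $4$. The lower bound is already in hand: the canonical set $S_4^{can}=S_3^{can}\cup\{(0,t_0)\}$ is genuinely $1$-equilateral, since $p_1,p_2,p_3$ are mutually at distance $1$ by the normalization preceding the statement, while Lemma~\ref{lem-4can} supplies $d((0,t_0),p_i)=1$ for $i=1,2,3$. It therefore remains only to rule out a fifth point.

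First I would argue by contradiction. Suppose $S=\{p_1,p_2,p_3,p_4,p_5\}$ is equilateral with $p_1,p_2,p_3$ lying in a common finite $\cC$-circle. Applying the reduction already established --- a Heisenberg translation placing the $\cC$-circle in the plane $t=0$ centred at the origin, followed by a dilation and a rotation --- we may assume $\{p_1,p_2,p_3\}=S_3^{can}$; by Lemma~\ref{lem-ayxcircle} and Proposition~\ref{prop-c-equid} the radius is forced to be $r_0=12^{-1/4}$, so no generality is lost. The two remaining points $p_4$ and $p_5$ must each be equidistant, at distance $1$, from all three points of $S_3^{can}$.

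Now I would invoke Lemma~\ref{lem-4can}, which asserts that the only points of $\fH$ at distance $1$ from every point of $S_3^{can}$ are $(0,t_0)$ and $(0,-t_0)$, with $t_0=\sqrt{11/12}$. Hence $\{p_4,p_5\}\subseteq\{(0,t_0),(0,-t_0)\}$, and since $p_4\neq p_5$ we are forced to have $\{p_4,p_5\}=\{(0,t_0),(0,-t_0)\}$. The contradiction then follows from a single distance evaluation: using (\ref{eq-d}),
\begin{equation*}
d\big((0,t_0),(0,-t_0)\big)=\big((2t_0)^2\big)^{1/4}=\sqrt{2}\,(11/12)^{1/4}\neq 1,
\end{equation*}
since this would equal $1$ only if $t_0=1/2$, i.e. $11/12=1/4$. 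Thus $p_4$ and $p_5$ cannot be at mutual distance $1$, contradicting equilaterality, and no fifth point can exist.

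I do not anticipate a genuine obstacle: the substantive work has been done in Lemma~\ref{lem-4can}, which pins the candidate points down to exactly two, and the only remaining content is the observation that these two ``polar'' points fail to be mutually equidistant. The one place demanding slight care is the reduction step, namely checking that the normalization bringing $\{p_1,p_2,p_3\}$ to $S_3^{can}$ is a similarity --- hence preserves equilaterality and the distance-$1$ condition --- and that it imposes no constraint on $p_4,p_5$ beyond what Lemma~\ref{lem-4can} already encodes; both facts are immediate from the description of the similarity group $G$ in Section~\ref{sec-heis}.
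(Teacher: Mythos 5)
Your argument is correct and follows essentially the same route as the paper: reduce to the canonical configuration, invoke Lemma~\ref{lem-4can} to pin the only candidates for additional points down to $(0,\pm t_0)$, and observe that $d((0,t_0),(0,-t_0))=\sqrt{2}\,(11/12)^{1/4}\neq 1$. Your write-up is in fact slightly more complete than the paper's, since you also verify explicitly that $S_4^{can}$ realizes the bound of four.
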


\begin{proof}
Suppose on the contrary that there exists an equilateral set $S=\{p_1,p_2,p_3,p_4,p_5\}$ such that  $p_i$, $i=1,2,3$, lie in the same finite $\C$-circle. Then the subset $S_0=\{p_1,p_2,p_3,p_4\}$ is $G$-equivalent to the set $S_4^{can}$. Since there exist no point equidistant to $S_4^{can}$, we obtain a contradiction ($d((0,t_0),\;(0,-t_0))\neq 1$).   
\end{proof}

\subsection{Two points in the same infinite $\C$-circle}\label{sec-2p-inf}
In this section we shall prove
\begin{thm}\label{prop:fin}
The maximum number of equidistant points in $\fH$ such that two of them lie in an infinite $\C$-circle is 4.
\end{thm}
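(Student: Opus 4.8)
The plan is to reuse the normalize-then-compute strategy from the proof of Lemma~\ref{lem-ayxcircle}. Let $S$ be an equilateral set two of whose points, say $p_1,p_2$, lie in a common infinite $\C$-circle. Since infinite $\C$-circles are vertical lines and are all $G$-images of $\cC_\infty$, I would first left-translate so that this $\C$-circle becomes the $t$-axis, and then apply a vertical translation so that $p_1,p_2$ sit symmetrically about the origin. As $S$ is $1$-equilateral, this yields $p_1=(0,-1/2)$ and $p_2=(0,1/2)$, which indeed satisfy $d(p_1,p_2)=1$.

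The decisive reduction is to pin down every remaining point of $S$. If $p=(z,t)\in S\setminus\{p_1,p_2\}$, then the conditions $d(p,p_1)=d(p,p_2)=1$, expanded via (\ref{eq-d}), read
\begin{equation*}
|z|^4+(t+1/2)^2=1,\qquad |z|^4+(t-1/2)^2=1.
\end{equation*}
Subtracting forces $t=0$, and then $|z|^4=3/4$. Hence every point of $S$ other than $p_1,p_2$ lies on the planar circle $\{(z,0):|z|=\rho\}$ with $\rho=(3/4)^{1/4}$ (a finite $\C$-circle in the plane $t=0$). This is the only step needing mild care, since a priori the intersection of two Kor\'anyi spheres could be more complicated; here it collapses to a single circle.

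Next I compute the mutual distance of two such points $(\rho e^{i\alpha},0)$ and $(\rho e^{i\beta},0)$. Direct substitution into (\ref{eq-d}) gives
$d^4=4\rho^4\bigl((1-\cos(\alpha-\beta))^2+\sin^2(\alpha-\beta)\bigr)=8\rho^4(1-\cos(\alpha-\beta))=6(1-\cos(\alpha-\beta))$,
so the equilateral condition $d=1$ is equivalent to $\cos(\alpha-\beta)=5/6$. Writing $\gamma_0=\arccos(5/6)$, any two of the extra points must be separated in angle by $\pm\gamma_0$ modulo $2\pi$.

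Finally comes the counting step. Suppose there were at least three extra points on the planar circle. Normalizing one of them to angle $0$, the other two must lie at angles in $\{\gamma_0,-\gamma_0\}$, and being distinct they must be exactly $\gamma_0$ and $-\gamma_0$. Their angular separation is $2\gamma_0$, yet $\cos(2\gamma_0)=2(5/6)^2-1=7/18\neq 5/6$, so these two are \emph{not} at distance $1$ from one another — a contradiction. Hence at most two extra points can occur, giving $|S|\le 4$. Conversely, the four points $(0,\pm1/2)$, $(\rho,0)$, $(\rho e^{i\gamma_0},0)$ are equilateral by the computations above, so the bound $4$ is attained. I do not anticipate a genuine obstacle in this case: the whole argument is a normalization followed by two short trigonometric identities, the only delicate point being the reduction of the two-sphere intersection to the planar circle, after which the elementary geometry of the angle $\gamma_0$ closes the proof.
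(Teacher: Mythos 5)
Your proposal is correct and follows essentially the same route as the paper: normalize $p_1=(0,-1/2)$, $p_2=(0,1/2)$, observe that all remaining points must lie on the planar circle $|z|=(3/4)^{1/4}$, $t=0$, and then show that circle carries at most two equilateral points (the paper cites Proposition~\ref{prop-c-equid} for this last step, whereas you redo the short trigonometric computation inline, and you additionally exhibit an explicit $4$-point equilateral set to confirm attainment).
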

\begin{proof}
Let $S=\{p_1,p_2,\dots\}$ be an equilateral set such that $p_1$ and $p_2$ lie in the same infinite $\C$-circle. By applying a left translation, a vertical translation if necessary, we may suppose that $p_1=(0,-1/2)$ and $p_2=(0,1/2)$. All points equidistant to $p_1$ and $p_2$ then lie in the $\C$-circle 
$$
\cC=\{(z,0)\in \fH\;|\;|z|=(3/4)^{1/4}\}.
$$
Since the radius of $\cC$ is greater than $12^{-1/4}$, our result follows from Proposition \ref{prop-c-equid}.
\end{proof}

\subsection{Two points in the same finite $\C$-circle}\label{sec-p-fin}
In this section we shall prove
\begin{thm}\label{prop:fin2}
The maximum number of equidistant points in $\fH$ such that at most two of them lie in the same finite $\C$-circle is 4.
\end{thm}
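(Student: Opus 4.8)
The plan is to reduce Theorem \ref{prop:fin2} to a one–dimensional question on an auxiliary curve, in the spirit of the elementary $\R^3$ argument of the Introduction, and then to settle that question by elimination. By Theorem \ref{prop:fin} we may assume that no two points of the equilateral set $S$ lie on a common infinite $\C$-circle; thus every pair of points of $S$ lies on a finite $\C$-circle, and by hypothesis no such circle carries three of them. Fix $p_1,p_2\in S$. Each further point $p\in S$ satisfies $d(p,p_1)=d(p,p_2)=1$, so it lies on
\[
\Lambda=S_\fH^1(p_1)\cap S_\fH^1(p_2),
\]
the intersection of the two unit Kor\'anyi spheres. Since $S$ is equilateral, a left translation taking $p_1$ to $o=(0,0)$ places $S\setminus\{p_1\}$ on the unit sphere $S^1_\fH$; a rotation $R_\phi$ and, if needed, the conjugation $j$ then normalise $p_2=(\sqrt{\cos\theta_2},\sin\theta_2)$, leaving the single modulus $\theta_2$. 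Because $p_1,p_2$ do not lie on a common infinite $\C$-circle we have $p_2\neq(0,\pm1)$, i.e. $\theta_2\in[0,\pi/2)$, and with this normalisation $\Lambda$ is exactly the curve \eqref{eq-sph-curve} with $(\theta_0,\phi_0)=(\theta_2,0)$.

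The heart of the proof is the claim that, for every admissible $\theta_2$, the curve $\Lambda$ carries at most two points at pairwise distance $1$. Granting this, $S\setminus\{p_1,p_2\}\subset\Lambda$ has at most two elements, whence $|S|\le4$; this is the bound that feeds into Theorem \ref{thm-main}, the reverse inequality $\dim_{\rm{E}}(\fH)\ge4$ being furnished by the set $S_4^{can}$. To prove the claim I would parametrise $\Lambda$ by $\theta$: on the interval where the right–hand side of \eqref{eq-sph-curve} lies in $[-1,1]$, solving for $\phi$ produces two branches $\phi=\phi_\pm(\theta)$. For two points $p_a,p_b\in\Lambda$, with parameters $(\theta_a,\varepsilon_a)$ and $(\theta_b,\varepsilon_b)$ where $\varepsilon\in\{+,-\}$ records the branch, the sphere distance formula \eqref{eq-dist-sph} then expresses $d^4(p_a,p_b)$ as an explicit trigonometric function of $\theta_a,\theta_b$. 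Assuming for contradiction three points $p_3,p_4,p_5\in\Lambda$ at pairwise distance $1$ yields three equations $d^4(p_i,p_j)=1$ in the unknowns $\theta_3,\theta_4,\theta_5$, one such system for each assignment of the branch signs and each value of $\theta_2$.

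It remains to show that this overdetermined system is inconsistent, and this is where the main difficulty lies. I would first use the hypothesis that no three points of $S$ lie on a common $\C$-circle to discard the degenerate branch patterns — those forcing two of the $p_i$ to coincide, or placing $p_3,p_4,p_5$ (or a triple involving $p_1$ or $p_2$) on a single $\C$-circle — and then eliminate $\theta_3,\theta_4,\theta_5$ from the three distance relations so as to obtain a contradiction valid for every $\theta_2\in[0,\pi/2)$. The obstacle is genuinely computational: since $\Lambda$ is given only implicitly by \eqref{eq-sph-curve} and the Kor\'anyi distance is quartic in the coordinates, the intermediate identities are long, and the real work is to arrange them so that the inconsistency can be read off uniformly across all branch patterns and the whole range of the modulus $\theta_2$. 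It is this bookkeeping — rather than any single hard step — that makes the argument lengthy, and it is natural to relegate the bulkiest of these identities to the Appendix. Combining the bound $|S|\le4$ obtained here with Theorems \ref{thm-4max3} and \ref{prop:fin} completes the proof of Theorem \ref{thm-main}.
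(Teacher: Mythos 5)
Your reduction coincides with the paper's: after translating $p_1$ to the origin, every further point of $S$ lies on the intersection $\Lambda$ of two unit Kor\'anyi spheres (the paper's curve $C_3^{fin}$), and everything comes down to showing that $\Lambda$ carries no equilateral triple. You are in fact more scrupulous than the paper about the normalisation: the distance-preserving stabiliser of the origin in $G$ consists only of rotations and the conjugation $j$, so a pair at distance $1$ retains the residual modulus $\theta_2$, which the paper suppresses by setting $p_2=(1,0)$ outright. Keeping $\theta_2$ is the honest thing to do, but it also turns your task into a one-parameter family of problems rather than a single one.

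The gap is precisely the step you flag as the main difficulty. You posit three points $p_3,p_4,p_5\in\Lambda$ at pairwise distance $1$, write the three relations $d^4(p_i,p_j)=1$, and propose to ``eliminate $\theta_3,\theta_4,\theta_5$ so as to obtain a contradiction.'' Nothing in the proposal explains why elimination should yield a contradiction rather than a solution: three equations in three real unknowns (plus branch signs and the modulus $\theta_2$) is, by naive count, a system one expects to admit isolated solutions, so some structural input is required to exclude them, and brute-force elimination of trigonometric quartics over all branch patterns is not a procedure guaranteed to terminate in a readable inconsistency. The paper supplies exactly the missing mechanism: writing $t=\tan(\theta/2)$, $t_0=\tan(\theta_0/2)$, the condition that two points of $C_3^{fin}$ with latitudes $\theta_0,\theta$ be at distance $1$ is squared so that the branch signs drop out, leaving the single sextic relation $P_{t_0}(t)=0$ of \eqref{eq-maineq}, which is \emph{symmetric}, $P_{t_0}(t)=P_t(t_0)$, and which for each admissible $t_0$ has exactly two real roots, one positive and one negative. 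A triangle in $C_3^{fin}$ would then force some $P_{t_1}$ to have two positive roots --- an immediate contradiction; combined with the antipodal bound \eqref{eq-antig1} and the Intermediate Value Theorem, this shows each $p_0\in C_3^{fin}$ has exactly two distance-$1$ neighbours on the curve, never mutually at distance $1$. Your plan contains no analogue of this root-counting-plus-symmetry device, and without it the claim that ``the inconsistency can be read off'' is an assertion, not a proof. (A minor further point: $S_4^{can}$ has three points on one finite $\C$-circle, so it does not witness the lower bound for \emph{this} statement; the paper instead exhibits quadruples with only two points per finite $\C$-circle, e.g.\ via the symmetric involution $j_2$ on $C_3^{fin}$.)
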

We consider an equilateral set $S=\{p_1,p_2,\dots...\}$ such that there are no three points lying in the same finite $\C$-circle. By letting an element of $G$ acting on $S$ we may normalize so that 
$$
p_1=(0,0), \quad p_2=(1,0).
$$
Since all other points in $S$ must lie in the unit sphere $S^1_\fH=\{(z,t)\in\fH\;|\; |z|^4+t^2=1\}$,  we shall use Kor\'anyi-Reimann spherical coordinates
$$
z=\sqrt{\cos\theta}\;e^{i\phi}, \quad t=\sin\theta,\;\; (\theta,\phi)\in\Pi=(-\pi/2,\pi/2)^2,
$$
to describe the points of $S$ from now on. Together with the notation $p=(z,t)$ we shall also use the notation $p=(\theta,\phi)$. Hence if $p=(\theta,\phi)\in S$ other than $p_1,p_2$, then by Equation \ref{eq-sph-curve} the condition $d(p,p_2)=1$ reads as
\begin{equation}\label{eq-c3fin}
1+6\cos\theta-8\sqrt{\cos\theta}\cos(\theta/2)\cos(\phi+\theta/2)=0.
\end{equation}
Equation \ref{eq-c3fin} represents a spherical curve which we shall denote by $C_3^{fin}$. Points in $C_3^{fin}$ are all of distance 1 from $p_1$ and $p_2$, that is, $C^{fin}_3$ is the intersection of $S^1_\fH$ and $S^1_\fH(1,0)$. In the appendix (Section \ref{sec-parametric}) we describe explicitly a parametric representation of $C^{fin}_3$: The curve $C_3^{fin}$ is the union of the graphs of the functions $\phi^\pm:I\to\R$ where
\begin{equation}\label{eq:phipm}
\phi^\pm(\theta)=-\frac{\theta}{2}\pm\arccos(f(\theta)) 
\end{equation} 
and 
\begin{equation}\label{eq:f}
f(\theta)=\frac{1+6\cos\theta}{8\sqrt{\cos\theta}\cos(\theta/2)},\quad \theta\in I.
\end{equation}
Here, $I=[-\theta^{*}_0,\theta_0^{*}]$, $\theta^{*}_0=\arccos(5/2-\sqrt{6})$.
\begin{center}
\includegraphics[scale=0.3]{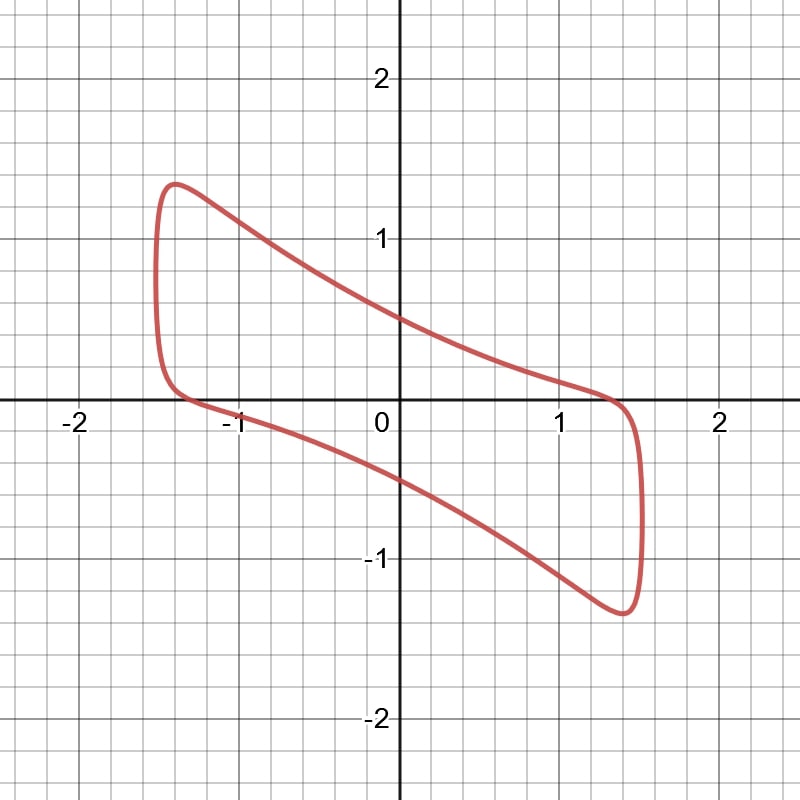}
\begin{figure}[!h]
\centering{Parametric representation of $C^{fin}_3$ in $(-\pi/2,\pi/2)^2.$ 
}
\end{figure}
\end{center}
We wish to note at this point that there are some rather obvious symmetries of $C_3^{fin}$. Namely, we have the following three involutions:
\begin{itemize}
\item The {\it antipodal involution} $j_1$:
$$
p=(\theta,\phi)\mapsto j_1(p)=(-\theta,-\phi).
$$
\item the {\it symmetric} involution $j_2$: 
$$
p=(\theta,\phi)\mapsto j_2(p)=(-\theta,\phi+\theta).$$
\item The {\it vertical} involution $j_3$: 
$$
p=(\theta,\phi)\mapsto j_3(p)=(\theta,-\phi-\theta).$$
\end{itemize}
These involutions satisfy the relation
$$
j_1\circ j_2=j_2\circ j_1=j_3
$$
and they are studied in detail in the appendix (Section \ref{sec-appendix}). We mention here that we have the following result concerning antipodal points: For all $p\in C^{fin}_3$, 
\begin{equation}\label{eq-antig1}
1.378\approx(2\sqrt{6}-3)^{1/2}\le d(p,j_1(p))\le (15/4)^{1/4}\approx 1.391.
\end{equation}
In both cases of symmetric and vertical points there exist pairs of points with distance 1. From this we establish the existence of equilateral quadruples of points which are such that two of them lie in the same finite $\C$-circle.
The next lemma shows that something much stronger holds.
 \begin{lem}\label{lem-auxfin2}
For each fixed point $p_0\in C_3^{fin}$ there exist at least two points $p\in C_3^{fin}$ such that $d(p_0,p)=1$. 
\end{lem}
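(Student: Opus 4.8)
The plan is to combine the topological structure of $C_3^{fin}$ (a simple closed curve, as described in the appendix) with the uniform antipodal lower bound (\ref{eq-antig1}), and then to extract the two points by a purely elementary intermediate-value argument applied to the distance function restricted to the curve. The point of the argument is that the distance $d(p_0,\cdot)$ must \emph{overshoot} the value $1$ along the curve, and a continuous function on a loop that starts below $1$ and rises above it must return, so it meets the level $1$ at least twice.

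First I would fix the topological setup. By the parametric description (\ref{eq:phipm})--(\ref{eq:f}), $C_3^{fin}$ is the union of the two graphs $\phi^{\pm}$ over $I=[-\theta_0^{*},\theta_0^{*}]$, and since $f(\pm\theta_0^{*})=1$ forces $\arccos(f)=0$, the branches satisfy $\phi^{+}=\phi^{-}$ at the endpoints and the two arcs close up into a single simple closed curve. Hence, for the given basepoint $p_0$, I may fix a continuous loop parametrization $\gamma:[0,L]\to C_3^{fin}$ with $\gamma(0)=\gamma(L)=p_0$ that is injective on $[0,L)$.

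Next, with $p_0\in C_3^{fin}$ fixed, I would consider the continuous function $h:[0,L]\to\R$, $h(s)=d(p_0,\gamma(s))-1$. Since $d$ induces the standard topology on $\fH$ and $\gamma$ is continuous, $h$ is continuous, and $h(0)=h(L)=d(p_0,p_0)-1=-1<0$. The essential input is that $h$ attains a strictly positive value somewhere on the curve. For this I would use the antipodal involution $j_1$, which preserves $C_3^{fin}$: thus $j_1(p_0)\in C_3^{fin}$ with $j_1(p_0)\neq p_0$, say $j_1(p_0)=\gamma(s^{*})$ for some $s^{*}\in(0,L)$, and by (\ref{eq-antig1}) we have $d(p_0,j_1(p_0))\ge(2\sqrt{6}-3)^{1/2}>1$, so $h(s^{*})>0$.

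Finally I would apply the intermediate value theorem on each of $[0,s^{*}]$ and $[s^{*},L]$: since $h(0)<0<h(s^{*})$ there is a zero $s_1\in(0,s^{*})$, and since $h(s^{*})>0>h(L)$ there is a zero $s_2\in(s^{*},L)$. Then $s_1\neq s_2$ and both lie in $(0,L)$, so by injectivity of $\gamma$ on $[0,L)$ the points $\gamma(s_1),\gamma(s_2)\in C_3^{fin}$ are distinct from each other and from $p_0$, and each satisfies $d(p_0,\gamma(s_i))=1$. These are the two required points. I expect the only genuine content to be what is already relegated to the appendix, namely that $C_3^{fin}$ is a simple closed curve and, more importantly, the \emph{uniform} antipodal estimate (\ref{eq-antig1}) that guarantees the overshoot; granting these, the lemma reduces to the soft parity/IVT statement above, so the main obstacle is merely keeping the topological normalisation (closedness and an injective loop parametrization based at $p_0$) clean.
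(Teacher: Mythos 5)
Your argument is correct and is essentially the paper's own proof: the authors also fix $p_0$, use the antipodal bound (\ref{eq-antig1}) to get $d(p_0,j_1(p_0))>1$, and apply the Intermediate Value Theorem to $d(p_0,\cdot)$ on each of the two arcs of $C_3^{fin}$ joining $p_0$ to $j_1(p_0)$, which are exactly your subintervals $[0,s^{*}]$ and $[s^{*},L]$. Your version merely makes the loop parametrization and the distinctness of the two resulting points more explicit.
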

\begin{proof}
Fix a point $p_0\in C_3^{fin}$ and consider the function $G_{p_0}:C_3^{fin}\to \R_{+}$ given by 
$$
G_{p_0}(p)=d(p,p_0).
$$
This function is continuous and bounded on both branches of $C_3^{fin}$ joining $p_0$ and $j_1(p_0)$. Since from  Eq. \ref{eq-antig1} we have $d(p_0,j_1(p_0))>1$, by the Intermediate Value Theorem there exists a point $p^{*}\in C_3^{fin}$ (at least one in each branch) such that $G_{p_0}(p^{*})=d(p^{*},p_0)=1$.
\end{proof}
However, in order to complete the proof of Theorem \ref{prop:fin2} we will show that for any fixed point $p_0\in C_3^{fin}$ there are {\it exactly} two points $p_1,p_2\in C^{fin}_3$ such that 
$$
d(p_0,p_1)=d(p_0,p_2)=1\;\text{and}\;d(p_1,p_2)\neq 1.
$$
To do so, we will mostly use straightforward calculations.

\subsubsection{Proof of Theorem \ref{prop:fin2}}
For a given $(\theta_0,\phi_0)$ such that
$$
\cos(\phi_0+\theta_0/2)=\frac{1+6\cos\theta_0}{8\sqrt{\cos\theta_0}\cos(\theta_0/2)}=f(\theta_0)=f_0=\frac{a_0}{b_0},
$$
we  want to find $(\theta,\phi)$ such that
$$
\cos\left((\phi+\theta/2)-(\phi_0+\theta_0/2)\right)=\frac{1+6\cos\theta\cos\theta_0-2\sin\theta\sin\theta_0}{8\sqrt{\cos\theta\cos\theta_0}\cos\left((\theta+\theta_0)/2\right)}=g(\theta,\theta_0)=g=\frac{A}{B}
$$
and
$$
\cos(\phi+\theta/2)=\frac{1+6\cos\theta}{8\sqrt{\cos\theta}\cos(\theta/2)}=f(\theta)=f=\frac{a}{b}.
$$
In the first place,
$$
g=ff_0+\sin(\phi+\theta/2)\sin(\phi_0+\theta_0/2);
$$
thus we have
$$
(g-ff_0)^2=(1-f^2)(1-f_0^2)
$$
which we write again as
\begin{equation}\label{eq-m-p}
(Abb_0-Baa_0)^2=B^2(b^2-a^2)(b_0^2-a_0^2).
\end{equation}
We calculate
\begin{eqnarray*}
&&
b^2-a^2=-4\cos^2\theta+20\cos\theta-1,\\
&&
b_0^2-a_0^2=-4\cos^2\theta_0+20\cos\theta_0-1,\\
&&
B^2=32\cos\theta\cos\theta_0(1+\cos(\theta+\theta_0)),
\end{eqnarray*}
and
$$
Abb_0-Baa_0=8\sqrt{\cos\theta\cos\theta_0}\left( C\cos(\theta/2)+D\sin(\theta/2)\right),
$$
where
\begin{eqnarray*}
&&
C=\cos(\theta_0/2)\left(c_1\cos\theta+c_2\right),\\
&&
D=\sin(\theta_0/2)\left(d_1\cos\theta+d_2\right),
\end{eqnarray*}
with
\begin{eqnarray*}
&&
c_1=6(2\cos\theta_0-1),\quad c_2=7-6\cos\theta_0,\\
&&
d_1=10(2\cos\theta_0-1),\quad d_2=-5(2\cos\theta_0+3).
\end{eqnarray*}
Thus Equation \ref{eq-m-p} reads now as
\begin{equation}\label{eq-m-p0}
2(C\cos(\theta/2)+D\sin(\theta/2))^2=(b^2-a^2)(b_0^2-a_0^2)(\cos\theta_0\cos\theta-\sin\theta_0\sin\theta+1).
\end{equation}
We set $t=\tan(\theta/2)$ and $t_0=\tan(\theta_0/2)$. Then we obtain the 6th degree polynomial equation
\begin{eqnarray}\label{eq-maineq}
P_{t_0}(t)&=&\left(5t_0(7-5t_0^2)\cdot t^3-(31t_0^2-5)\cdot t^2+5t_0(7t_0^2+3)\cdot t-7+5t_0^2\right)^2\\
\notag &&-(-25t_0^4+6t_0^2+15)(-25t^4+6t^2+15)(t_0t-1)^2=0.
\end{eqnarray}
As a function of variables $t_0$ and $t$, $P$ is a symmetric polynomial: $P_{t_0}(t)=P_t(t_0)$. Moreover, it represents the closed curve which is depicted in the following figure.
\begin{center}
\includegraphics[scale=0.3]{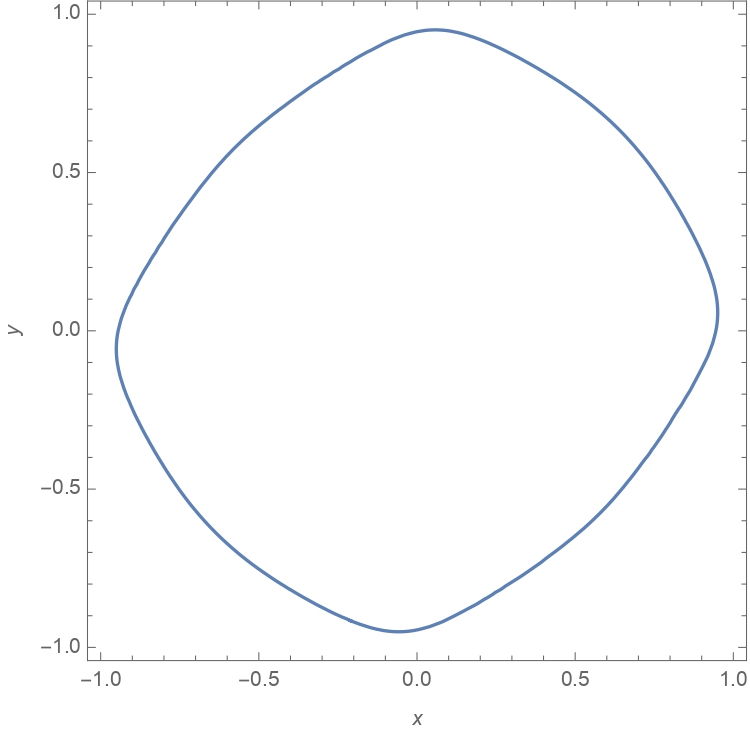}
\begin{figure}[!h]
\centering{The curve $P=0.$ 
}
\end{figure}
\end{center}
The right term of (\ref{eq-maineq}) vanishes when $t_0=\pm t^{*}$, $t^{*}=\tan(\theta^{*}/2)$, $\theta^{*}=\arccos(5/2-\sqrt{6})$. The equation is subsequently reduced to a cubic equation which has only one real root. This root correspond to the upper (resp. lower) point of the above curve. In all other cases we see that the equation has exactly two real roots; one of them is negative and the other is positive. 

The most tractable sub-case here is when $t_0=0$ ($\theta_0=0$). Then (\ref{eq-maineq}) is reduced to
$$
(5t^2-7)^2=15(-25t^4+6t^2+15).
$$  
This gives
$$
t=\pm\sqrt{\frac{1+2\sqrt{3}}{5}}\implies \theta=\pm 2\arctan\left(\sqrt{\frac{1+2\sqrt{3}}{5}}\right)=\pm\vartheta_0.
$$
Now, $\theta_0=0$ corresponds to two points on the curve $C_3^{fin}$, namely the points $p_0=(0,\arccos(7/8))$ and $j_1(p_0)$. From our solution we also have four points, namely
$$
q_0=(\vartheta_0,\phi^+(\vartheta_0)),\quad j_i(q_0),\;i=1,2,3.
$$
We have
\begin{eqnarray*}
&&
d(p_0, q_0)=d(p_0,j_2(q_0))=1,\\
&&
d(j_1(p_0),j_1(q_0))=d(j_1(p_0),j_3(q_0))=1.
\end{eqnarray*}
It is straightforwardly verified that all other distances between these points are different than 1.

In the general case, let with no loss of generality $t_0>0$ corresponding to a fixed point $p_0=(\theta_0,\phi_0)\in C^{fin}_3$. Let also $t_1<0$ and $t_2>0$ the two roots of $P_{t_0}$:
$$
P_{t_0}(t_1)=P_{t_0}(t_2)=0,
$$ 
and these two roots correspond to points $p_1=(\theta_1,\phi_1)$ and $p_2=(\theta_2,\phi_2)$ in $C^{fin}_3$, respectively. Supposing that $d(p_1,p_2)=1$ we would have $P_{t_1}(t_2)=0$. But from symmetry of $P$ we would also have $P_{t_1}(t_0)=0$, hence $P_{t_1}$ has two positive roots, a contradiction. This completes the proof.

\section{Appendix}\label{sec-appendix}
\subsection{Parametric representation of $\C^{fin}_3$}\label{sec-parametric}
The curve $C_3^{fin}$ intersects the $\theta$-axis at points $(\pm\arccos(1/4),0)$ and the $\phi$-axis at points $(0,\;\pm\arccos(7/8))$. To find the range of $\theta$ we put $\phi+\theta/2=0$ and from
$$
\frac{1+6\cos\theta}{8\sqrt{\cos\theta}\cos(\theta/2)}=\pm 1
$$
we find $\theta\in I=[-\theta^*,\theta^*]$ where $\theta^*=\arccos(5/2-\sqrt{6})$.
We set
\begin{equation*}
f(\theta)=\frac{1+6\cos\theta}{8\sqrt{\cos\theta}\cos(\theta/2)},\quad \theta\in I,
\end{equation*}
as in \ref{eq:f}.
Its derivative in ${\rm Int}(I)$ is given by
$$
f'(\theta)=-\sin\theta\cdot\frac{4\cos\theta-1}{32\cos^{3/2}\theta\cos^3(\theta/2)}.
$$
The function $f$:
\begin{itemize}
\item attains its global maximum value 1 at the points $\pm\arccos(5/2-\sqrt{6})$;
\item attains its global minimum value $\sqrt{5/8}$ at the points $\pm\arccos(1/4)$;
\item attains a locally maximum value $7/8$ at 0.
\end{itemize}
Moreover, $f$ is:
\begin{itemize}
\item strictly monotone decreasing at the intervals $$
[-\arccos(5/2-\sqrt{6}),\;-\arccos(1/4)],\quad [0,\;\arccos(1/4)];
$$
\item strictly monotone increasing at the intervals
$$
[-\arccos(1/4),\;0],\quad [\arccos(1/4),\;\arccos(5/2-\sqrt{6})].
$$
\end{itemize}
\begin{center}
\includegraphics[scale=0.3]{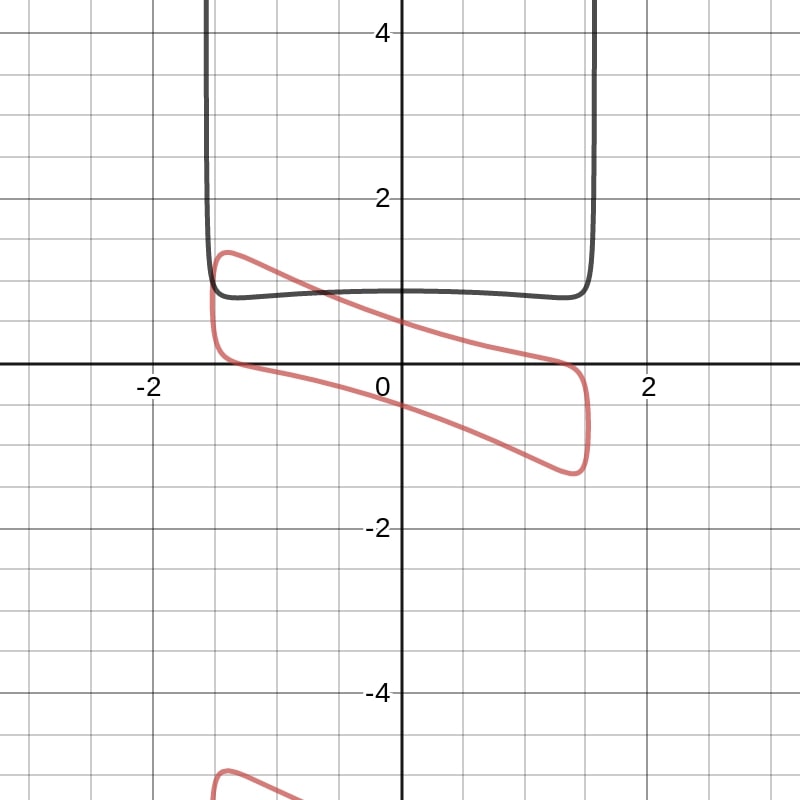}
\begin{figure}[!h]
\centering{The function $f.$ 
}
\end{figure}
\end{center}
We note that
$$
\sqrt{5/8}\le\cos(\phi+\theta/2)\le 1,
$$
which also reads as
$$
-\arccos(\sqrt{5/8})\le \phi+\theta/2\le \arccos(\sqrt{5/8}).
$$
We conclude that  $C_3^{fin}$ is the union of the graphs of the functions $\phi^\pm:I\to\R$ as in \ref{eq:phipm}.

\subsection{Symmetries of $C_3^{fin}$}\label{sec-symetries}
The involution $j:(z,t)\mapsto(\overline{z},-t)$ of the Heisenberg group is restricted to an involution $j_1$ defined in  $C_3^{fin}$ which maps each $p=(\theta,\phi)\in C_3^{fin}$ to the point $j_1(p)=(-\theta,-\phi)\in C_3^{fin}$. That is to say, for each  $\theta\in I,$
\begin{eqnarray*}
&&
(\theta, \phi^+(\theta))\mapsto(-\theta, \phi^-(-\theta))\\ 
&&
(\theta, \phi^-(\theta))\mapsto(-\theta, \phi^+(-\theta)).
\end{eqnarray*}
We call $j_1$ the antipodal involution of $C_3^{fin}$; this defines antipodal pairs of points $(p,j_1(p))$, $p\in C_3^{fin}$.

Let $j_1$ be the antipodal involution and let the function $h:C_3^{fin}\to \R^+$ given by $$h(p)=d^4(p,\;j_1(p)).$$ Then  $h$ as a function of $\theta\in I$ is given by:
$$
h(\theta)=\frac{1}{2}\cdot\frac{8\cos^3\theta-12\cos^2\theta+12\cos\theta+7}{1+\cos\theta},\quad \theta\in I.
$$
The following hold: 
\begin{enumerate} 
\item [{a)}] $h$ attains its global maximum value $15/4$ at the three pairs of antipodal points
\begin{eqnarray*}
&&
q_1^\pm=\pm\left(0,\; \arccos(7/8)\right),\\
&&
 q_2^\pm=\pm\left(\arccos(1/4),\;0\right),\\
 &&
 q_3^\pm=\pm (\arccos(1/4),\;-\arccos(1/4)).
\end{eqnarray*}
\end{enumerate}
Also, 
\begin{enumerate} 
\item[{b)}] $h$ attains its global minimum value $(2\sqrt{6}-3)^2$ at the three pairs of antipodal points
\begin{eqnarray*}
&&
r_1^\pm=\pm(\arccos((5-2\sqrt{6})/2),\;-(1/2)\arccos((5-2\sqrt{6})/2)),\\
&&
r_2^\pm=\pm(\arccos((-1+\sqrt{6})/2),\\
&&-(1/2)\arccos((-1+\sqrt{6})/2)+\arccos(\sqrt{5/8}
\cdot(-2+3\sqrt{6})/5)),\\
&&
r_3^\pm=\pm(\arccos((-1+\sqrt{6})/2),\\
&&-(1/2)\arccos((-1+\sqrt{6})/2)-\arccos(\sqrt{5/8}
\cdot(-2+3\sqrt{6})/5)).
\end{eqnarray*}
\end{enumerate}
Finally,
\begin{enumerate} 
\item [{c)}] (Eq. \ref{eq-antig1}) for all $p\in C^{fin}_3$, 
\begin{equation*}
1.378\approx(2\sqrt{6}-3)^{1/2}\le d(p,j_1(p))\le (15/4)^{1/4}\approx 1.391.
\end{equation*}
\end{enumerate}
\begin{center}
\includegraphics[scale=0.2]{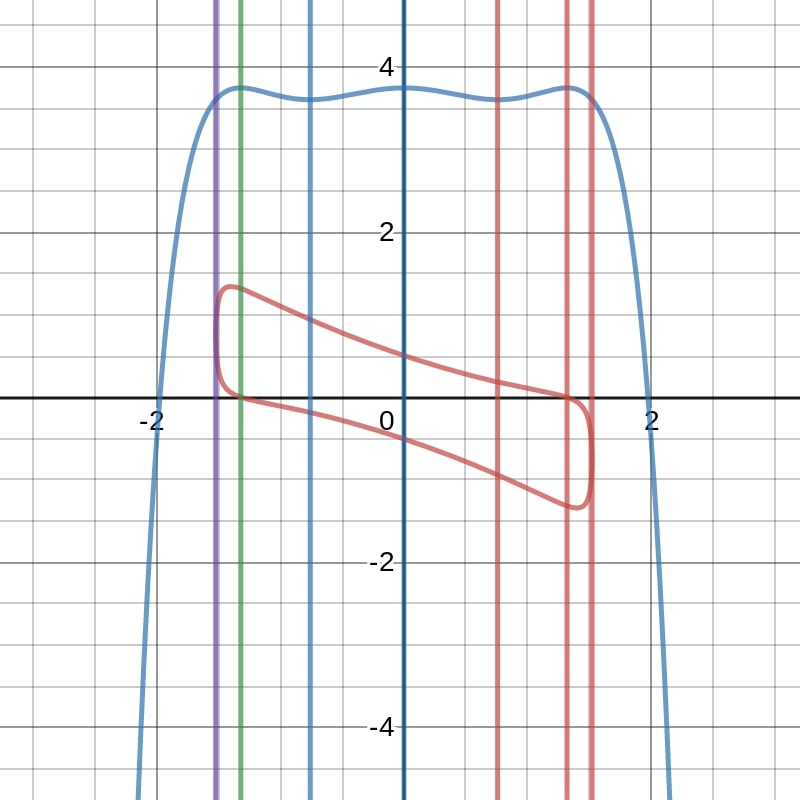}
\begin{figure}[!h]
\centering{The antipodal function $h$.
}
\end{figure}
\end{center}
We conclude from Eq. \ref{eq-antig1} that the Kor\'anyi diametre of $C_3^{fin}$ is greater than 1 and that for any two antipodal points $p,j_1(p)$ we always have $d(p,j_1(p))>1$. It is worth to mention here the following:
\begin{cor}
The curve $C_3^{fin}$ is tangent to the horizontal foliation at points $q_i^\pm$, $i=1,2,3$. Moreover,
\begin{eqnarray*}
&&
d^4(q_1^+,q_2^+)=d^4(q_2^+,q_3^+)=d^4(q_3^+,q_1^-)=d^4(q_1^-,q_2^-)
=d^4(q_2^-,q_3^-)=d^4(q_3^-,q_1^+)=3/8,\\
&&
d^4(q_1^+,q_3^+)=d^4(q_2^+,q_1^-)=d^4(q_3^+,q_2^-)=d^4(q_1^-,q_3^-)
=d^4(q_2^-,q_1^+)=d^4(q_3^-,q_2^+)=9/4,\\
&&
 d^4(q_1^+,q_1^-)=d^4(q_2^+,q_2^-)=d^4(q_3^+,q_3^-)=15/4.
 \end{eqnarray*}
\end{cor}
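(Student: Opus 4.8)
The plan is to establish the two assertions of the corollary separately: the tangency of $C_3^{fin}$ to the horizontal foliation at the six points $q_i^\pm$, and then the listed values of $d^4$. I keep working in the Kor\'anyi-Reimann chart $(\theta,\phi)$, in which $C_3^{fin}$ is the locus $\cos(\phi+\theta/2)=f(\theta)$ with $f$ as in \ref{eq:f}, and I recall that the six points sit over the three abscissae $\theta\in\{0,\arccos(1/4),-\arccos(1/4)\}$, namely $q_1^\pm=\pm(0,\arccos(7/8))$, $q_2^\pm=\pm(\arccos(1/4),0)$ and $q_3^\pm=\pm(\arccos(1/4),-\arccos(1/4))$.

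For the tangency I first make precise what the horizontal foliation of $S^1_\fH$ is. The contact form of $\fH$ is $\omega=dt+2(x\,dy-y\,dx)$; substituting $z=\sqrt{\cos\theta}\,e^{i\phi}$ and $t=\sin\theta$ gives $x\,dy-y\,dx=\cos\theta\,d\phi$ and $dt=\cos\theta\,d\theta$, so the restriction of $\omega$ to the sphere is $2\cos\theta\,d\psi$, where $\psi:=\phi+\theta/2$. Away from the poles $(0,\pm1)$ the kernel of this $1$-form is a nonsingular line field, and its integral curves are exactly the level sets $\psi=\mathrm{const}$; this is the horizontal foliation. On $C_3^{fin}$ one has $\cos\psi=f(\theta)$, and differentiating along the curve gives $-\sin\psi\,d\psi=f'(\theta)\,d\theta$. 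Since $0<f<1$ at all six points we have $\sin\psi\neq0$ there, so the curve is tangent to a horizontal leaf precisely when $f'(\theta)=0$. From $f'(\theta)=-\sin\theta\,(4\cos\theta-1)/\bigl(32\cos^{3/2}\theta\,\cos^3(\theta/2)\bigr)$ these zeros are exactly $\theta\in\{0,\pm\arccos(1/4)\}$, i.e.\ the points $q_i^\pm$; this is precisely why the statement is a corollary, the zeros of $f'$ being the same $\theta$-values at which $h$ attains its maximum.

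For the distances I would use the isometric symmetries of $C_3^{fin}$ to cut the fifteen pairs down to a handful of genuine evaluations of $d^4=|z-w|^4+(t-s+2\Im(z\overline w))^2$. The three antipodal values cost nothing: each $q_i^+$ is a global maximum of $h$ and $j_1(q_i^+)=q_i^-$, whence $d^4(q_i^+,q_i^-)=h(q_i^+)=15/4$. For the remaining twelve pairs I invoke two elements of $G$ preserving $C_3^{fin}$: the conjugation $j(z,t)=(\overline z,-t)$, inducing $j_1$ and swapping $q_i^+\leftrightarrow q_i^-$, and the involution $g=L_{(1,0)}\circ R_\pi$, $g(z,t)=(1-z,\,t+2\Im z)$, which interchanges the base points $(0,0)$ and $(1,0)$ (hence preserves $C_3^{fin}$), fixes $q_2^\pm$ and swaps $q_1^\pm\leftrightarrow q_3^\pm$. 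Under the order-four group $\langle j_1,g\rangle$ the six neighbouring pairs split into two orbits, as do the six next-neighbour pairs, so it suffices to evaluate $d^4$ on four representatives. Converting to Heisenberg coordinates, e.g.\ $q_1^+=(7/8+i\sqrt{15}/8,\,0)$, $q_2^+=(1/2,\,\sqrt{15}/4)$, $q_3^+=(1/8-i\sqrt{15}/8,\,\sqrt{15}/4)$, one finds $d^4(q_1^+,q_2^+)=d^4(q_3^+,q_1^-)=3/8$ and $d^4(q_1^+,q_3^+)=d^4(q_2^+,q_1^-)=9/4$; the symmetries then spread these values over all twelve pairs.

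The computations are elementary and present no real obstacle; the only two points needing care are conceptual. First, one must identify the horizontal foliation correctly as the kernel foliation of $\omega|_{S^1_\fH}$ and recognize it as $\{\psi=\mathrm{const}\}$, after which tangency collapses to the transparent criterion $f'(\theta)=0$. Second, since the distance reduction rests on $g$, one should check that $g$ really lies in $G$ and preserves $C_3^{fin}$; it is worth stressing that the purely combinatorial curve-symmetries $j_2,j_3$ are \emph{not} isometries of $(\fH,d)$ and so may not be used for this purpose. With these observations the corollary reduces to the four short substitutions above.
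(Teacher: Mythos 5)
Your proof is correct and follows essentially the same route as the paper: the tangency claim is exactly the paper's computation, writing the contact form as $\omega=\cos\theta(d\theta+2d\phi)$ in Kor\'anyi--Reimann coordinates and reducing tangency along $C_3^{fin}$ to the vanishing of $f'(\theta)$ at $\theta\in\{0,\pm\arccos(1/4)\}$, while the distances come from direct substitution into the Kor\'anyi distance formula. Your only addition is to use the isometry $g=L_{(1,0)}\circ R_\pi$ together with $j_1$ (and the value $h=15/4$ for the antipodal pairs) to cut the fifteen evaluations down to four representatives --- a tidy organization of what the paper dismisses as ``straightforward calculations'' --- and all of your numerical values check out.
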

\begin{proof}
For the first statement, let $\omega=dt+2\Im(\overline{z}dz)$ be the contact form of $\fH$. Then it is written in terms of Kor\'anyi-Reimann coordinates as
$$
\omega=\cos\theta(d\theta+2d\phi).
$$
For $\phi$ as in (\ref{eq:phipm}) we then have
$$
d\theta+2d\phi=\pm\frac{2f'(\theta)}{\sqrt{1-f^2(\theta)}}\;d\theta
$$
and the proof follows from the properties of $f$ and $h$. The proof of the second statement follows after straightforward calculations. 
\end{proof}

The  symmetric involution $j_2$ maps each $p=(\theta,\phi)\in C_3^{fin}$ to the point $j_2(p)=(-\theta,\phi+\theta)$.  It defines symmetric pairs of points $p,j_2(p)$, $p\in C_3^{fin}$. Let
$$
s(p)=d^4(p,j_2(p)), \quad p\in C_3^{fin}.
$$
As a function of $\theta\in I$,
\begin{eqnarray*}
s(\theta)&=&2+4\cos^2\theta+2-8\cos\theta\\
&=&4(1-\cos\theta)^2=16\sin^4(\theta/2).
\end{eqnarray*}
\begin{center}
\includegraphics[scale=0.2]{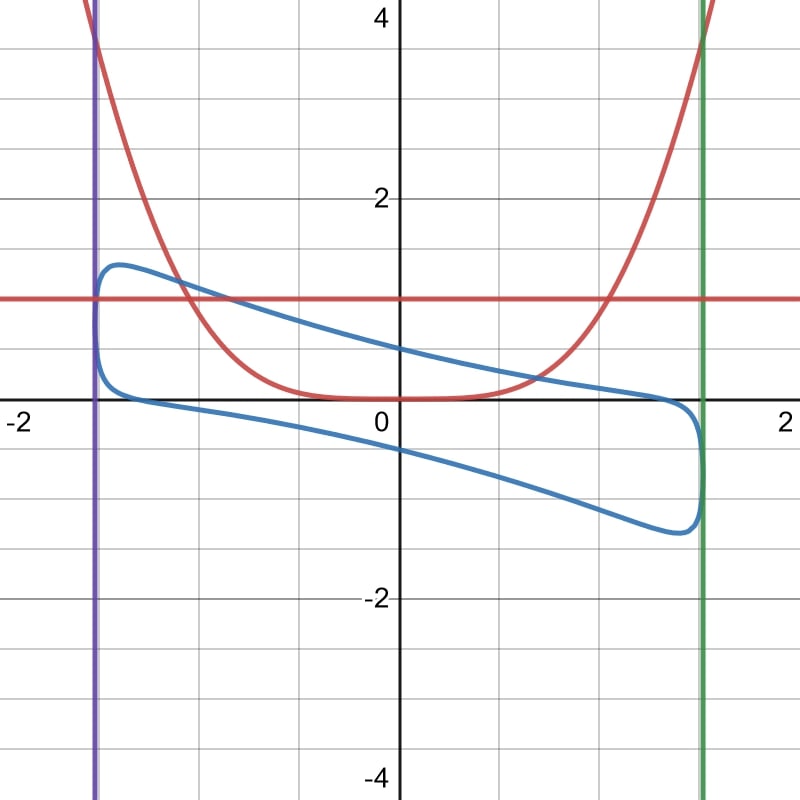}
\begin{figure}[!h]
\centering{The symmetric function $s$.
}
\end{figure}
\end{center}
The function $s$:
\begin{enumerate}
\item attains its global maximum value $4(\sqrt{6}-3/2)^2$ at points $\pm\theta^*$.
\item determines the following pairs of symmetric points with distance 1:
\begin{eqnarray*}
&&
(\pi/3,\;-\pi/6+\arccos(\sqrt{6}/3)),\quad (-\pi/3,\;\pi/6+\arccos(\sqrt{6}/3)),\\
&&
(\pi/3,\;-\pi/6-\arccos(\sqrt{6}/3)),\quad (-\pi/3,\;\pi/6-\arccos(\sqrt{6}/3)).
\end{eqnarray*}
\end{enumerate}
Finally,  the vertical involution $j_3$ of $C_3^{fin}$ maps each $p=(\theta,\phi)\in C_3^{fin}$ to the point $j_3(p)=(\theta,-\phi-\theta)$. Note that  $j_3$ is just the map $(\theta,\phi^+(\theta))\mapsto(\theta,\phi^-(\theta))$; hence vertical points $p$ and $j_3(p)$ lie in the same spherical $\C$-circle ($j_3(p$) is the image of $p$ by some rotation of $\fH$). 
Let $v(p)=d^4(p,j_3(p))$, $p\in C_3^{fin}$.
 As a function of $\theta\in I$,
\begin{eqnarray*}
v(\theta)&=&\frac{\cos\theta}{2(1+\cos\theta)}(-4\cos^2\theta+20\cos\theta-1).
\end{eqnarray*}
\begin{center}
\includegraphics[scale=0.2]{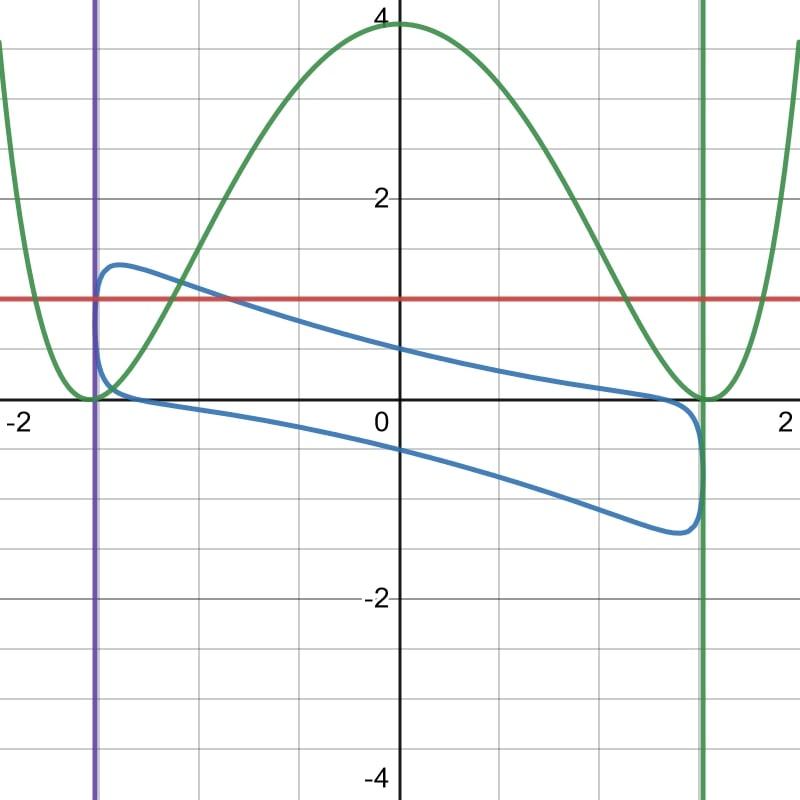}
\begin{figure}[!h]
\centering{The vertical function $v$.
}
\end{figure}
\end{center}
The function $v(\theta)$:
\begin{enumerate}
\item attains its maximum value $15/4$ at $0$;
\item determines the following pairs of vertical points with distance 1:
\begin{eqnarray*}
&&
(\theta_0,\;\phi^+(\theta_0)),\; (\theta_0,\;\phi^-(\theta_0)),\\
&&
(-\theta_0,\;\phi^+(-\theta_0)),\; (-\theta_0,\;\phi^-(-\theta_0)),
\end{eqnarray*}
where $\theta_0=\arccos(0.42...)$ is the absolute value of the two solutions of the equation
$$
4\cos^3\theta-2\cos^2\theta+3\cos\theta+2=0.
$$ 
\end{enumerate}

\end{document}